\documentclass[11pt, a4paper]{amsproc}

\usepackage{amsmath,amssymb,amsthm,url}

\newcommand{\C}{\mathbb{C}}

\newcommand{\F}{\mathbb{F}}
\newcommand{\SL}{\mathrm{SL}}

\newtheorem{prop}{Proposition}
\newtheorem{lem}{Lemma}
\newtheorem{lemma}[lem]{Lemma}
\newtheorem{cor}{Corollary}
\newtheorem{corollary}[cor]{Corollary}
\newtheorem{thm}{Theorem}
\newtheorem{theorem}[thm]{Theorem}

\theoremstyle{remark}
\newtheorem{remark}{Remark}
\theoremstyle{plain}

\title{Cubic maps from the group of order $3$}

\author{Vadim Alekseev}
\address{Vadim Alekseev, Fakult\"at f\"ur Mathematik, TU Dresden, Germany}
\email{vadim.alekseev@tu-dresden.de}

\author{Andreas Thom}
\address{Andreas Thom, Fakult\"at f\"ur Mathematik, TU Dresden, Germany}
\email{andreas.thom@tu-dresden.de}

\date{\today}

\begin{document}

\begin{abstract}
The purpose of this note is to classify unital cubic maps from the cyclic group of order $3$ into an arbitrary non-abelian group. We show that the universal group admitting a unital cubic map from the cyclic group of order $3$ is infinite, give a concrete presentation and provide an infinite representation of it in ${\rm PSL}_3(\mathbb C)$, whose image is an arithmetic lattice commensurable with ${\rm PSL}_3(\mathbb Z[\omega])$, where $\omega$ is a primitive cube root of unity. As a consequence we obtain the existence of finite nilpotent groups of arbitrarily large nilpotency class admitting a unital cubic map from $C_3$ whose image generates the group.
\end{abstract}

\maketitle

\section*{Introduction and basic definitions}

The study of polynomial maps between groups was initiated by A. Leibman, \cite{MR1608723, MR1910931}, motivated by applications in ergodic theory and additive combinatorics. Let $G,H$ be groups and let $\varphi:G\to H$ be a map.
For $k\in G$ we define the \emph{finite difference} (or discrete derivative in direction $k$) by
\begin{equation}\label{eq:finite-difference}
    (\Delta_k\varphi)(g):=\varphi(kg)\varphi(g)^{-1},\qquad g\in G.
\end{equation}
Following \cite{MR1608723}, we define polynomial maps by iterating finite differences.
We say that $\varphi$ is \emph{polynomial of degree $-1$} if it is identically~$1$.
For $d\ge -1$, we say that $\varphi$ is \emph{polynomial of degree $d+1$} if, for every $k\in G$,
the map $\Delta_k\varphi:G\to H$ is polynomial of degree $d$.
In particular, polynomial maps of degree $0$ are constant, and polynomial maps of degree $1$
are products of a homomorphism and a constant (cf.\ \cite[Lemma~2.1]{AJ}).
We call $\varphi$ \emph{unital} if $\varphi(1)=1$. Throughout, we will assume that all polynomial maps are unital, since any polynomial map can be unitalized by multiplying with a suitable constant. We will refer to polynomial maps of degree at most $2$ as \emph{quadratic} and of degree at most $3$ as \emph{cubic}.

In \cite{AJ} the authors introduced a new approach to polynomial maps between non-abelian groups, which allows for a more explicit description of the structure of quadratic maps. Given a group $G$ and $k \in \mathbb N$, there is always a universal group ${\rm Pol}_k(G)$ which admits a universal polynomial map $\varphi \colon G \to {\rm Pol}_k(G)$ of degree $k$; any polynomial map of degree $k$ from $G$ to $H$ arises as the composition of $\varphi$ and a uniquely determined homomorphism from ${\rm Pol}_k(G)$ to $H$. In the case of quadratic maps, the structure of the universal group can be described in concrete terms, while the structure of these groups for $k \geq 3$ is still somewhat mysterious even in the simplest cases. Apart from $G=C_2$, where ${\rm Pol}_k(C_2)$ is isomorphic to $C_{2^k}$ for all $k \geq 0$, and perfect groups, see \cite[Corollary 1.5]{AJ}, where the natural map ${\rm Pol}_k(G) \to G$ is an isomorphism for all $k \geq 1$, there is not a single computation of ${\rm Pol}_k(G)$ in the literature for $k \geq 3$.

\medskip

In this note we apply the results from \cite{AJ} to classify cubic maps from the cyclic group of order $3$.  While this is of course a natural next question, the computation is somewhat non-trivial and the result unexpected. Throughout this paper we let $C_3=\langle \sigma\mid \sigma^3=1\rangle$ and $\tau:=\sigma^2$. As mentioned above, the group ${\rm Pol}_3(C_3)$ is the universal group admitting a unital cubic map $\varphi \colon C_3 \to {\rm Pol}_3(C_3)$, i.e. any unital cubic map $\alpha \colon C_3 \to G$ to an arbitrary group $G$ factors through a (unique) homomorphism $\bar \alpha \colon {\rm Pol}_3(C_3)\to G$, i.e. $\alpha = \bar \alpha \circ \varphi$.
Existence of the universal group ${\rm Pol}_3(C_3)$ follows easily from the fact that the category of groups admits arbitrary products, and a first description in terms of generators and relations is similar to the one for ${\rm Pol}_2(G)$, see also \cite[Remark 2.5]{AJ}.

\medskip

The following is our first main result:

\begin{theorem} \label{main}
The group ${\rm Pol}_3(C_3)$ is isomorphic to the group
$$\langle a,b \mid (ba)^3, (ab^{-1}a)^3, [ba,ab^{-1}a] \rangle,$$ where $a = \varphi(\sigma), b=\varphi(\tau)$ for the universal unital cubic map $\varphi \colon C_3 \to {\rm Pol}_3(C_3)$. 

The elements $a,b$ have infinite order and the group ${\rm Pol}_3(C_3)$ contains a non-abelian free subgroup.
\end{theorem}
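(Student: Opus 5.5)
The plan has two halves: first identify ${\rm Pol}_3(C_3)$ with the given presentation by direct computation with finite differences, then realise that group inside ${\rm PSL}_3(\C)$ to get infinite order and a free subgroup.

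\emph{The presentation.} By universality, ${\rm Pol}_3(C_3)$ is generated by $a=\varphi(\sigma)$ and $b=\varphi(\tau)$, with $\varphi(1)=1$. Its defining relations are exactly those forcing every fourfold difference $\Delta_{k_1}\Delta_{k_2}\Delta_{k_3}\Delta_{k_4}\varphi$ to vanish identically. Since $\Delta_1$ is trivial, I will use the inductive definition: $\varphi$ is cubic iff $\Delta_\sigma\varphi$ and $\Delta_\tau\varphi$ are quadratic, and these need not be unital.

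\begin{enumerate}
\item Make the quadratic condition on a map $\psi\colon C_3\to H$ explicit, using the description of quadratic maps from \cite{AJ} (or a direct computation). Since $C_3$ has only three elements, this is a finite list of words in $\psi(1)$, $\psi(\sigma)$, $\psi(\tau)$.
\item Substitute the values of $\Delta_\sigma\varphi$, namely $a,\ ba^{-1},\ b^{-1}$ on $1,\sigma,\tau$, and the values of $\Delta_\tau\varphi$, namely $b,\ a^{-1},\ ab^{-1}$.
\item Reduce the resulting finite set of relators by Tietze transformations. The natural new generators are $x=ba$ and $y=ab^{-1}a$, and I expect the relators to collapse to $x^3=y^3=[x,y]=1$.
\item Conversely, in $\Gamma=\langle a,b\mid (ba)^3,(ab^{-1}a)^3,[ba,ab^{-1}a]\rangle$, check that $1\mapsto 1$, $\sigma\mapsto a$, $\tau\mapsto b$ satisfies every fourfold-difference relation.
\end{enumerate}

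Steps 3 and 4 together give mutually inverse homomorphisms between ${\rm Pol}_3(C_3)$ and $\Gamma$.

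\emph{The main obstacle} is step 3. The raw list from step 2 contains many redundant and conjugated relators, and one must show that each follows from the three displayed ones, and conversely. I would try first to express each raw relator as a word in $x$, $y$ and conjugates by $a$, and reduce everything to the $C_3\times C_3$ relations.

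\emph{Infinite order and free subgroup.} Here I would use the representation announced in the abstract. Find matrices $A,B\in{\rm PSL}_3(\mathbb Z[\omega])$ for which $BA$ and $AB^{-1}A$ are commuting elements of order $3$. Natural candidates are diagonal-type matrices of order $3$ conjugated suitably, for instance $BA={\rm diag}(1,\omega,\omega^2)$ and $AB^{-1}A$ a commuting permutation-type matrix.

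With such a representation, choose $A$ and $B$ so that each has an eigenvalue of absolute value $\neq 1$, or a nontrivial unipotent part; then $a$ and $b$ have infinite order. Next show the image is Zariski dense in ${\rm PSL}_3(\C)$, for example by checking that it acts irreducibly and is not virtually solvable. The Tits alternative then supplies a non-abelian free subgroup, which lifts to ${\rm Pol}_3(C_3)$.

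Commensurability with ${\rm PSL}_3(\mathbb Z[\omega])$ is not needed for the theorem itself, only Zariski density together with one infinite-order element.
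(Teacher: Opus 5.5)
Your treatment of the presentation follows the paper's route: unitalize the two first differences, impose the quadratic condition, and pass to $ba$ and $ab^{-1}a$. But it stops where the actual work is.

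First, you never state the quadratic criterion. What is needed is Proposition~\ref{prop:Pol2C3-symmetric}: a unital $\psi\colon C_3\to H$ is quadratic iff $p=\psi(\sigma)$, $q=\psi(\tau)$ satisfy $p^9=q^9=1$, $qpq^{-1}=p^4$, $pqp^{-1}=q^4$. So ${\rm Pol}_2(C_3)$ is the exponent-$9$ group $3^{1+2}_-$, not the Heisenberg group claimed in \cite{AJ}; be careful if you take it from there.

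Second, the collapse you only ``expect'' is the real content of the first claim. Put $z=a^{-1}b^{-1}=(ba)^{-1}$ and $w=a^{-1}ba^{-1}=(ab^{-1}a)^{-1}$. The $\sigma$-difference gives the exponent-$9$ relations for the pair $(w,z)$. The $\tau$-difference, conjugated by $a^{-1}$, gives the same relations for $(z,v)$ with $v=zw^{-1}$. Exponent $3$ appears only when both are used together. The first pair gives $w^{-1}zw=z^7$, hence $z^4=vzv^{-1}=zw^{-1}zwz^{-1}=z^7$, so $z^3=1$. Then $[z,w]=1$ and $w^3=1$ follow. Your plan has nothing that produces this step.

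The second half is a genuine gap. Infinite order of $a,b$ and the free subgroup are the non-formal part of the theorem, and your route to them rests entirely on a representation you never construct. Saying ``find $A,B$ with $BA$ diagonal and $AB^{-1}A$ a commuting permutation-type matrix'' does not give one. With $x=BA$ and $y=AB^{-1}A$ you would still have to solve $A^2x^{-1}A=y$ for $A$, and then show the resulting image is infinite and not virtually solvable. The proposal does neither.

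The paper gets explicit matrices from a computer-assisted search (Jambor's $L_3$--$U_3$-quotient algorithm) and verifies the relators. The remaining claims then follow by computation in that representation; for instance $K_2\subseteq\pi(\Gamma)$ contains the free group $\langle E_{12}(3),E_{21}(3)\rangle$.

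Your guess for the shape of the pair is actually consistent with the paper's representation: $X=\pi(ba)$ and $Y=\pi(ab^{-1}a)$ have eigenvalues $1,\omega,\omega^2$ and satisfy $XY=\omega^2YX$ in ${\rm SL}_3$. But your requirement $A,B\in{\rm PSL}_3(\mathbb Z[\omega])$ rules that representation out. Up to a cube root of unity, ${\rm tr}\,\pi(a)=r^2$. Also $r^2\notin\mathbb Q(\omega)$, because $(1-\omega)^2$ is not a cube there. So no conjugate of $\pi(a)$ lies in ${\rm PSL}_3(\mathbb Q(\omega))$; only the index-$3$ subgroup $\pi(\Gamma^\circ)$ is integral (Theorem~\ref{thm:arithmetic}(1)).

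Finally, irreducibility plus non-virtual-solvability does not imply Zariski density. You do not need density for the Tits alternative, but non-virtual-solvability still has to be proved, and that again needs the representation in hand.
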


While the first claim is a matter of direct computation, the second claim is more subtle and relies on a computer-assisted search for an infinite representation of ${\rm Pol}_3(C_3)$ in ${\rm PSL}_3(\mathbb C)$. As a result, the unital map $\varphi \colon C_3 \to {\rm PSL}_3(\mathbb C)$ given by 
\[
\varphi(\sigma)=
\begin{pmatrix}
\omega r^2 & \frac{-2\omega-1}{3}r^2 & \frac{-\omega-2}{3}r^2\\
-r^2 & -\omega r^2 & \frac{-2\omega-1}{3}r^2\\
-(\omega+1)r^2 & r^2 & r^2
\end{pmatrix}
\quad \mbox{and} \quad
\varphi(\tau)=
\begin{pmatrix}
0 & 0 & \frac{\omega+2}{3}r\\
 r & 0 & (\omega+1)r\\
0 & r & \omega r
\end{pmatrix}
\]
with $\omega\in\C$ a primitive cube root of unity and $r$ a root of $r^3=1-\omega$, is cubic and leads to a representation of ${\rm Pol}_3(C_3)$ in ${\rm PSL}_3(\mathbb C)$, whose image is an arithmetic lattice commensurable with ${\rm PSL}_3(\mathbb Z[\omega])$. This arithmetic lattice can be described explicitly, see Theorem \ref{thm:arithmetic}, and the remaining claims in Theorem~\ref{main} follow from direct computation in this representation. There is another representation of ${\rm Pol}_3(C_3)$ in ${\rm SL}_3(\F_3((u)))$ with arithmetic image, which is described in Theorem \ref{thm:arithmetic2}.

\medskip

For a unital cubic map $\varphi:G\to H$, each first difference $\Delta_k\varphi$ is polynomial of degree~$2$.
We will unitalize these first differences as follows.
For $k\in G$ define
\begin{equation}\label{eq:beta-definition}
\beta_k(g):=\varphi(k)^{-1}(\Delta_k\varphi)(g)=\varphi(k)^{-1}\varphi(kg)\varphi(g)^{-1},\qquad g\in G.
\end{equation}

Let $\varphi \colon C_3 \to G$ be a unital cubic map. As a consequence $\beta_{\sigma}$ and $\beta_{\tau}$ are unital quadratic maps from $C_3$ to $G$ and can be further analyzed using the results from \cite{AJ}. Section \ref{sec:quadratic} contains an explicit description of the structure of $\mathrm{Pol}_2(C_3)$. This is then used in Section \ref{sec:cubic}, which is devoted to the classification of cubic maps from $C_3$ and the proof of Theorem~\ref{main}. 

Finally, Section \ref{sec:arithmetic} contains a description of the arithmetic lattice in the locally compact group ${\rm PSL}_3(\mathbb C)$ arising as the image of ${\rm Pol}_3(C_3)$ under the representation described above. This part makes use of Jambor's $L_3$--$U_3$-quotient algorithm \cite{Jam12} and relies on computer-assisted searches and computations which we verified independently using Magma, see the accompanying code.

\section{Quadratic maps and $\mathrm{Pol}_2(C_3)$}
\label{sec:quadratic}

The following proposition corrects an erroneous claim in \cite[Remark 4.5]{AJ}, where the group $\mathrm{Pol}_2(C_3)$ was claimed to be isomorphic to the Heisenberg group with coefficients in $\mathbb{Z}/3\mathbb{Z}$, see Remark \ref{rem:Heisenberg}. 

\begin{prop}\label{prop:Pol2C3-symmetric}
We have
\begin{equation}\label{eq:symmetric}
\mathrm{Pol}_2(C_3)\;\cong\;\Big\langle a,b\ \Big|\ a^9=1,\ b^9=1,\ bab^{-1}=a^4,\ aba^{-1}=b^4\Big\rangle,
\end{equation}
where $a=\phi(\sigma)$ and $b=\phi(\tau)$ for the universal unital quadratic map
$\phi:C_3\to \mathrm{Pol}_2(C_3)$. The group $\mathrm{Pol}_2(C_3)$ has order $27$ and is isomorphic to $C_9 \rtimes C_3$, where the generator of $C_3$ acts with multiplication by $4$ on $C_9$.
\end{prop}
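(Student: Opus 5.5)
We need to show that $\mathrm{Pol}_2(C_3)$ has the presentation \eqref{eq:symmetric} and order $27$. The plan has three steps.

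\emph{Step 1: relations that hold for every unital quadratic map.} Let $\phi\colon C_3\to H$ be unital quadratic and put $a=\phi(\sigma)$, $b=\phi(\tau)$. By definition, for each $k$ the map $\beta_k(g)=\phi(k)^{-1}\phi(kg)\phi(g)^{-1}$ is a unital map of degree at most $1$. By the cited \cite[Lemma~2.1]{AJ} it is therefore a homomorphism $C_3\to H$. We write these conditions out.
\begin{itemize}
\item For $k=\sigma$ we get $\beta_\sigma(\sigma)=a^{-1}ba^{-1}$ and $\beta_\sigma(\tau)=a^{-1}b^{-1}$.
\item The homomorphism conditions are $\beta_\sigma(\sigma)^2=\beta_\sigma(\tau)$ and $\beta_\sigma(\sigma)^3=1$.
\item The first of these reads $a^{-1}ba^{-2}ba^{-1}=a^{-1}b^{-1}$, i.e.\ $ba^{-2}b=a^{-1}\cdot a\,b^{-1}\ldots$; after cleaning up this should be equivalent to a conjugation relation of the form $aba^{-1}=b^{4}$ (possibly in an equivalent guise such as $bab=\ldots$).
\item By the symmetry $\sigma\leftrightarrow\tau$ (the automorphism of $C_3$), $k=\tau$ gives the mirrored relation $bab^{-1}=a^4$.
\item The cube condition should then yield $a^9=b^9=1$, possibly after combining it with the conjugation relations.
\end{itemize}
I will carry out this algebra carefully, since the exact form of the relations (e.g.\ whether one gets $b^4$ or $b^{-2}$) is where the earlier error in \cite[Remark 4.5]{AJ} presumably came from.

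\emph{Step 2: the presented group is the universal one.} Conversely, let $P$ denote the presented group, define $\phi(1)=1$, $\phi(\sigma)=a$, $\phi(\tau)=b$, and check that $\phi$ is quadratic. We need each $\beta_k$ to be a homomorphism for $k=\sigma,\tau$; the case $k=1$ is trivial. Since all defining relations were derived as equivalent to these conditions, this is a reversal of the computation in Step 1. Then $P$, with this $\phi$, satisfies the universal property: any quadratic $\alpha$ sends $a,b$ to elements satisfying the relations, and $a,b$ generate $P$. Hence $P\cong\mathrm{Pol}_2(C_3)$.

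\emph{Step 3: structure and order.} From $aba^{-1}=b^4$, the subgroup $\langle b\rangle$ is normal and $a$ acts on it by $x\mapsto x^4$. Since $4^3=64\equiv1\pmod 9$, this is consistent with $a^3$ acting trivially. Next, $bab^{-1}=a^4$ gives $a^3=a^{-1}\cdot bab^{-1}$, i.e.\ $a^3=[a^{-1},b]$-type element lying in $\langle b\rangle$. I expect one gets $a^3=b^{-3}$ or $a^3=b^{3}$, so $|P|\le 9\cdot3=27$.

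For the lower bound, construct $C_9\rtimes C_3=\langle x,y\mid x^9,\ y^3,\ yxy^{-1}=x^4\rangle$ and find images of $a,b$ satisfying all four relations that generate the group. I would try $b=x$ and $a=yx^{j}$ for suitable $j$, with a short check.

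The main obstacle is Step 3's identification of the exact relation between $a^3$ and $b^3$ and the explicit model. Computing in $C_9\rtimes C_3$ via the normal form $y^ix^m$ settles it.
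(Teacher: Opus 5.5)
Your strategy is sound and differs from the paper's. The paper imports the explicit model $(\omega(C_3)\otimes_{\mathbb Z}C_3)\rtimes_\psi C_3$ from \cite[Theorem~1.3]{AJ}. From it, it reads off that $\mathrm{Pol}_2(C_3)$ has order $27$, checks the four relations by computing in the model, and then bounds the presented group by $27$. It never has to derive the relations abstractly. You instead use that a unital $\phi$ is quadratic if and only if $\beta_\sigma,\beta_\tau$ are homomorphisms. That gives a presentation of $\mathrm{Pol}_2(C_3)$ straight from the universal property, so you need a concrete group only for the lower bound. This is more self-contained, since only \cite[Lemma~2.1]{AJ} is used. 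But all the weight then sits on Step~1, which you have not carried out, and your predictions for it are inaccurate.

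With $w=a^{-1}ba^{-1}$, the square condition $w^2=a^{-1}b^{-1}$ reads $ba^{-2}ba^{-1}=b^{-1}$, which is not by itself $aba^{-1}=b^4$. Given it, the cube condition $w^3=w^2w=1$ becomes $b^{-1}ab=a^{-2}$. Substituting $a^{-2}=b^{-1}ab$ back turns the square condition into $a^{-1}ba=b^{-2}$. So $\beta_\sigma$ is a homomorphism if and only if
\[
b^{-1}ab=a^{-2},\qquad a^{-1}ba=b^{-2},
\]
and $\beta_\tau$ gives the same pair. No order relation appears, so your expectation that the cube condition ``should yield $a^9=b^9=1$'' is not borne out as stated. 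Without $a^9=b^9=1$, these relations cannot be converted into $bab^{-1}=a^4$ and $aba^{-1}=b^4$.

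The missing step is this. With $[x,y]=x^{-1}y^{-1}xy$, the relations say $[a,b]=a^{-3}$ and $[b,a]=b^{-3}$, so $a^3=b^{-3}$. This element commutes with both $a$ and $b$, hence $a^3=b^{-1}a^3b=(b^{-1}ab)^3=a^{-6}$, i.e.\ $a^9=1$; symmetrically $b^9=1$. Now $b^{-1}ab=a^7$ together with $4\cdot 7\equiv 1\pmod 9$ gives $bab^{-1}=a^4$, and likewise $aba^{-1}=b^4$. The converse is direct substitution, which settles Steps~1 and~2.

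Step~3 is then fine. The fact $a^3\in\langle b\rangle$ already gives $|P|\le 27$, and indeed $a^3=b^{-3}$. In $C_9\rtimes C_3$, the assignment $b=x$, $a=yx^j$ satisfies all four relations exactly when $j\equiv 2\pmod 3$. For example $a=yx^2$ works, with $a^3=x^6=b^{-3}$, and these images generate the group.
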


\begin{proof}
By \cite[Theorem~1.3]{AJ}, there is an isomorphism
\[
\mathrm{Pol}_2(C_3)\;\cong\;(\omega(C_3)\otimes_{\mathbb Z} C_3)\rtimes_{\psi} C_3,
\]
where $C_3$ acts on $\omega(C_3)$ by left multiplication, acts trivially on $C_3=\mathrm{ab}(C_3)$,
the multiplication is
\[
(\xi,g)(\eta,h)=(\xi+g\eta+\psi(g,h),gh),
\]
and $\psi(g,h)=c(g)\otimes \bar h$ with $c(g)=g-1$ \cite[Theorem~1.3 and its proof]{AJ}. Moreover the universal map is
\[
\phi(g)=(0,g)\qquad(g\in C_3)
\]
\cite[proof of Theorem~1.3]{AJ}.

Set $V:=\omega(C_3)\otimes_{\mathbb Z} C_3$ (written additively).
Since $\omega(C_3)$ is free abelian of rank $2$ with basis
$\alpha:=\sigma-1$ and $\beta:=\tau-1$, we have $V\cong (\mathbb Z/3\mathbb Z)^2$ with basis
\[
e_1:=\alpha\otimes \bar\sigma,\qquad e_2:=\beta\otimes \bar\sigma.
\]
In particular, $|V|=9$, hence $|\mathrm{Pol}_2(C_3)|=|V||C_3|=27$.

Write $a:=\phi(\sigma)=(0,\sigma)$ and $b:=\phi(\tau)=(0,\tau)$.
Using $\psi(g,h)=c(g)\otimes \bar h$ and $\bar\tau=2\bar\sigma$, we record
\[
\psi(\sigma,\sigma)=e_1,\quad \psi(\tau,\sigma)=e_2,\quad
\psi(\sigma,\tau)=2e_1,\quad \psi(\tau,\tau)=2e_2.
\]
Then
\begin{align*}
a^2 &= (0,\sigma)^2=(\psi(\sigma,\sigma),\tau)=(e_1,\tau),\\
a^3 &= (e_1,\tau)(0,\sigma)=(e_1+\psi(\tau,\sigma),1)=(e_1+e_2,1),
\end{align*}
so $a^9=1$. Similarly,
\begin{align*}
b^2 &= (0,\tau)^2=(\psi(\tau,\tau),\sigma)=(2e_2,\sigma),\\
b^3 &= (2e_2,\sigma)(0,\tau)=(2e_2+\psi(\sigma,\tau),1)=(2e_1+2e_2,1),
\end{align*}
so $b^9=1$.

For inverses we use $(0,g)^{-1}=(-\psi(g^{-1},g),g^{-1})$ \cite[proof of Theorem~1.3]{AJ}.
Thus
\begin{align*}
b^{-1} &= (-\psi(\sigma,\tau),\sigma)=(-2e_1,\sigma)=(e_1,\sigma),\\
a^{-1} &= (-\psi(\tau,\sigma),\tau)=(-e_2,\tau)=(2e_2,\tau).
\end{align*}
Now compute
\begin{align*}
bab^{-1}
&=(0,\tau)(0,\sigma)(e_1,\sigma)\\
&=(\psi(\tau,\sigma),1)(e_1,\sigma)\\
&=(e_2,1)(e_1,\sigma)\\
&=(e_1+e_2,\sigma),
\end{align*}
while
\[
a^4=a^3a=(e_1+e_2,1)(0,\sigma)=(e_1+e_2,\sigma),
\]
hence $bab^{-1}=a^4$. Likewise,
\begin{align*}
aba^{-1}
&=(0,\sigma)(0,\tau)(2e_2,\tau)\\
&=(\psi(\sigma,\tau),1)(2e_2,\tau)\\
&=(2e_1+2e_2,\tau)=b^4.
\end{align*}
So \eqref{eq:symmetric} holds in $\mathrm{Pol}_2(C_3)$.

Let $\Gamma$ be the group given by the presentation \eqref{eq:symmetric}.
Since $\mathrm{Pol}_2(C_3)$ is generated by $\phi(\sigma)$ and $\phi(\tau)$
(by construction of $\mathrm{Pol}_2(G)$ on generators $\phi(g)$ \cite[\S2, proof of Theorem~1.2]{AJ}),
the assignment $a\mapsto \phi(\sigma)$, $b\mapsto \phi(\tau)$ defines a surjective homomorphism
$\Gamma\twoheadrightarrow \mathrm{Pol}_2(C_3)$.

It remains to bound $|\Gamma|$.
From $bab^{-1}=a^4$ and $a^9=1$ we get
\[
ba^3b^{-1}=(bab^{-1})^3=(a^4)^3=a^{12}=a^3,
\]
so $\langle a^3\rangle$ is central in $\Gamma$ and has size $\le 3$.
In the quotient $\Gamma/\langle a^3\rangle$ the relation $bab^{-1}=a^4$ becomes $bab^{-1}=a$,
so $a$ and $b$ commute there; moreover $(\bar a)^3=(\bar b)^3=1$ because $a^9=b^9=1$.
Hence $|\Gamma/\langle a^3\rangle|\le 3^2=9$, so $|\Gamma|\le 27$.

Since $\Gamma$ surjects onto $\mathrm{Pol}_2(C_3)$ and both have order $27$, the map is an isomorphism.
\end{proof}

\begin{remark} \label{rem:Heisenberg}
Note that while the Heisenberg group with coefficients in $\mathbb{Z}/3\mathbb{Z}$ is also a non-abelian $3$-group of order $27$, it is not isomorphic to $\mathrm{Pol}_2(C_3)$, since the former has exponent $3$ while the latter has exponent $9$. In standard notation, the Heisenberg group with coefficients in $\mathbb{Z}/3\mathbb{Z}$ is the extraspecial group $3^{1+2}_+$, while $\mathrm{Pol}_2(C_3)$ is the extraspecial group $3^{1+2}_-$. These are the only non-abelian groups of order $27$.
\end{remark}

\section{Classification of cubic maps from $C_3$}
\label{sec:cubic}

We will now apply the results of the previous section to classify cubic maps on $C_3$ and to prove Theorem~\ref{main}.
Let $\varphi \colon C_3 \to G$ be a unital cubic map into an arbitrary group $G$. We set $a:= \varphi(\sigma)$ and $b:=\varphi(\tau)$. Then
\[
\beta_\sigma(h):= \varphi(\sigma)^{-1}\varphi(\sigma h)\varphi(h)^{-1}
\quad \mbox{and} \quad
\beta_\tau(h):= \varphi(\tau)^{-1}\varphi(\tau h)\varphi(h)^{-1}
\]
are unital quadratic maps from $C_3$ to $G$. We compute those maps explicitly in terms of $a,b$ as follows:

\begin{align*}
\beta_{\sigma}(\sigma)
&= a^{-1}\varphi(\sigma^2)\varphi(\sigma)^{-1}=a^{-1}ba^{-1},\\
\beta_{\sigma}(\tau)
&= a^{-1}\varphi(\sigma\tau)\varphi(\tau)^{-1}=a^{-1}\varphi(1)\,b^{-1}=a^{-1}b^{-1},\\
\beta_{\tau}(\sigma)
&= b^{-1}\varphi(\tau\sigma)\varphi(\sigma)^{-1}=b^{-1}\varphi(1)\,a^{-1}=b^{-1}a^{-1},\\
\beta_{\tau}(\tau)
&= b^{-1}\varphi(\tau^2)\varphi(\tau)^{-1}=b^{-1}\varphi(\sigma)b^{-1}=b^{-1}ab^{-1}.
\end{align*}

It follows that the pairs $(a^{-1}ba^{-1},a^{-1}b^{-1})$ and $(b^{-1}a^{-1},b^{-1}ab^{-1})$ must satisfy the relations from Proposition \ref{prop:Pol2C3-symmetric}. In particular, we get the following relations:
\begin{subequations}\label{eq:quadratic-relations}
\begin{align}
(a^{-1}ba^{-1})^9 &= 1, \label{eq:quadratic-relations-1}\\
(a^{-1}b^{-1})^9 &= 1, \label{eq:quadratic-relations-2}\\
(a^{-1}b^{-1})\,(a^{-1}ba^{-1})\,(a^{-1}b^{-1})^{-1} &= (a^{-1}ba^{-1})^4, \label{eq:quadratic-relations-3}\\
(a^{-1}ba^{-1})\,(a^{-1}b^{-1})\,(a^{-1}ba^{-1})^{-1} &= (a^{-1}b^{-1})^4, \label{eq:quadratic-relations-4}\\
(b^{-1}a^{-1})^9 &= 1, \label{eq:quadratic-relations-5}\\
(b^{-1}ab^{-1})^9 &= 1, \label{eq:quadratic-relations-6}\\
(b^{-1}ab^{-1})\,(b^{-1}a^{-1})\,(b^{-1}ab^{-1})^{-1} &= (b^{-1}a^{-1})^4, \label{eq:quadratic-relations-7}\\
(b^{-1}a^{-1})\,(b^{-1}ab^{-1})\,(b^{-1}a^{-1})^{-1} &= (b^{-1}ab^{-1})^4. \label{eq:quadratic-relations-8}
\end{align}
\end{subequations}

Note that \eqref{eq:quadratic-relations-5} follows from \eqref{eq:quadratic-relations-2} by conjugation with $a$. We set
\[
z := a^{-1}b^{-1},\quad w := a^{-1}ba^{-1},\quad v := a^{-1}(b^{-1}ab^{-1})a = z w^{-1}.
\]
Rewriting the relations, we end up with:
\begin{align*}
&w^9\stackrel{\eqref{eq:quadratic-relations-1}}{=}1, \quad v^9\stackrel{\eqref{eq:quadratic-relations-6}}{=}1, \quad z^9\stackrel{\eqref{eq:quadratic-relations-2}}{=}1\\
 &z w z^{-1}\stackrel{\eqref{eq:quadratic-relations-3}}{=}w^4,\quad w z w^{-1}\stackrel{\eqref{eq:quadratic-relations-4}}{=}z^4,\\
&vz v^{-1}\stackrel{\eqref{eq:quadratic-relations-7}}{=}z^4, \quad  z v z^{-1}\stackrel{\eqref{eq:quadratic-relations-8}}{=} v^4,
\end{align*}
where we applied conjugation with $a^{-1}$ to the last two relations. Now, observe that $wzw^{-1}=z^4$ and $z^9=w^9=1$ imply $w^{-1}z w = z^7$, since $w^{-1}=w^8$ and $4^8 \equiv 7 \pmod{9}$.
We compute
$$z^4=vzv^{-1} = zw^{-1}zwz^{-1} = z z^7 z^{-1} = z^7$$
and obtain $z^3=1$. But this implies that $z$ commutes with $v,w$ and hence $w^3=v^3=1$. Thus, the elements $a,b$ necessarily satisfy the following relations:
$$(ba)^3, (ab^{-1}a)^3, [ba,ab^{-1}a].$$

We define $\Gamma:= \langle a,b\mid (ba)^3, (ab^{-1}a)^3, [ba,ab^{-1}a]\rangle.$ Our computation already implies that ${\rm Pol}_3(C_3)$ is a quotient of $\Gamma$, sending $a$ to $\varphi(\sigma)$ and $b$ to $\varphi(\tau)$. We now seek to show that the converse also holds.

\begin{lemma} The map $\varphi:C_3\to \Gamma$ given by $\varphi(\sigma)=a$ and $\varphi(\tau)=b$ is cubic. \end{lemma}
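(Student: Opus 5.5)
Set $\varphi(1)=1$, $\varphi(\sigma)=a$, $\varphi(\tau)=b$ in $\Gamma$. By definition, $\varphi$ is cubic if and only if each $\Delta_k\varphi$ is quadratic. Since $\Delta_k\varphi=\varphi(k)\beta_k$ and a constant times a quadratic map is quadratic, it suffices to show that $\beta_1,\beta_\sigma,\beta_\tau$ are quadratic. The map $\beta_1$ is identically $1$. For $\beta_\sigma$ and $\beta_\tau$ I use the universal property of $\mathrm{Pol}_2(C_3)$ from Proposition~\ref{prop:Pol2C3-symmetric}. A unital map $\psi\colon C_3\to\Gamma$ is quadratic precisely when the pair $(\psi(\sigma),\psi(\tau))$ satisfies the relations of \eqref{eq:symmetric}: in that case $\psi$ is the composition of the universal quadratic map $\phi$ with the homomorphism $\mathrm{Pol}_2(C_3)\to\Gamma$ determined by those relations, and compositions of polynomial maps with homomorphisms are polynomial of the same degree. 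So the task reduces to checking the eight relations \eqref{eq:quadratic-relations} in $\Gamma$.

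With $z=a^{-1}b^{-1}$, $w=a^{-1}ba^{-1}$ and $v=zw^{-1}$ as above, we have $z=(ba)^{-1}$ and $w=(ab^{-1}a)^{-1}$. The defining relations of $\Gamma$ therefore say exactly that $z^3=w^3=1$ and $[z,w]=1$. Hence $\langle z,w\rangle$ is abelian of exponent $3$, and so $v=zw^{-1}$ also satisfies $v^3=1$ and commutes with $z$ and $w$. Every relation in the list is now immediate. The ninth powers are trivial because the cubes are. The conjugation relations, for instance $zwz^{-1}=w^4$, reduce to $w=w^4$, which holds because $w^3=1$. The same argument gives $wzw^{-1}=z^4$, $vzv^{-1}=z^4$ and $zvz^{-1}=v^4$. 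For relations \eqref{eq:quadratic-relations-5}--\eqref{eq:quadratic-relations-8}, I translate back by conjugating with $a$: $b^{-1}a^{-1}=aza^{-1}$ and $b^{-1}ab^{-1}=ava^{-1}$, and the relations among $z,v$ transfer under conjugation.

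The one point that needs care is the direction of this implication. The earlier computation deduced the defining relations of $\Gamma$ from \eqref{eq:quadratic-relations}. I must make sure that the relations of $\Gamma$ conversely imply the full set \eqref{eq:quadratic-relations}, and in particular that $\beta_\tau$ corresponds to the pair $(b^{-1}a^{-1},b^{-1}ab^{-1})$ in the correct order $(\psi(\sigma),\psi(\tau))$. This is exactly the verification above, and since everything lives in an abelian group of exponent $3$, it is mechanical.

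It then follows that $\beta_\sigma$ and $\beta_\tau$ are quadratic, so all first differences of $\varphi$ are quadratic and $\varphi$ is cubic.
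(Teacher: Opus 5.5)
Your proposal is correct and takes the paper's approach: reduce to checking the relations of Proposition~\ref{prop:Pol2C3-symmetric} for the pairs $(\beta_k(\sigma),\beta_k(\tau))$. These hold because $z=(ba)^{-1}$ and $w=(ab^{-1}a)^{-1}$ generate an abelian group of exponent $3$ that also contains $v=zw^{-1}$. You also write out what the paper leaves implicit: the $\beta_\tau$ case via conjugation by $a$, and the factorization through $\mathrm{Pol}_2(C_3)$ that shows the relations imply quadraticity.
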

\begin{proof}
We have to check that $\Delta_k\varphi$ is quadratic for $k=\sigma$ and $k=\tau$. We only check the first case, the second is similar. We have to check that $\beta_\sigma$ is quadratic, i.e., that the relations from Proposition \ref{prop:Pol2C3-symmetric} are satisfied for the pair $(a^{-1}ba^{-1},a^{-1}b^{-1})$. But this follows easily from the relations in $\Gamma$ as we have already seen above.
\end{proof}

\begin{proof}[Proof of Theorem \ref{main}] By the universal property of ${\rm Pol}_3(C_3)$, the map $\varphi \colon C_3 \to \Gamma$ factors through a homomorphism ${\rm Pol}_3(C_3) \to \Gamma$. This map is clearly surjective and a section of the previously discussed surjection in the inverse direction. This implies that both groups are isomorphic identifying $a$ with $\varphi(\sigma)$ and $b$ with $\varphi(\tau)$.
\end{proof}

\begin{corollary}
The abelianization of ${\rm Pol}_3(C_3)$ is isomorphic to $C_9 \times C_3$. In particular, any cubic map from $C_3$ to an abelian group is uniquely determined by elements $a = \varphi(\sigma),b = \varphi(\tau)$ in $G$ satisfying $a^9=b^9=(ab)^3=1$.
\end{corollary}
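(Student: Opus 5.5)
Since Theorem~\ref{main} identifies ${\rm Pol}_3(C_3)$ with $\Gamma=\langle a,b\mid (ba)^3,(ab^{-1}a)^3,[ba,ab^{-1}a]\rangle$, the plan is to compute the abelianization of $\Gamma$ directly from the presentation. Write the abelianization additively with generators $x=\bar a$ and $y=\bar b$. The commutator relation $[ba,ab^{-1}a]$ dies. The two remaining relations become $3(x+y)=0$ and $3(2x-y)=0$. So $\Gamma^{\rm ab}\cong \mathbb Z^2/L$, where $L$ is the lattice spanned by the rows $(3,3)$ and $(6,-3)$.

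Next I would compute the Smith normal form of $\begin{pmatrix}3&3\\6&-3\end{pmatrix}$. The gcd of the entries is $3$, and the determinant is $-9-18=-27$. Hence the invariant factors are $3$ and $9$, giving $\Gamma^{\rm ab}\cong C_3\times C_9$.

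For a more explicit description, change basis to $u=x+y$ and $x$. The relations become $3u=0$ and $3(3x-u)=9x-3u=0$. Since $3u=0$, these are equivalent to $9x=0$ together with $3u=0$. Thus $\Gamma^{\rm ab}=\langle x\rangle\oplus\langle u\rangle\cong C_9\times C_3$.

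For the second assertion, note that a unital cubic map $\varphi\colon C_3\to G$ with $G$ abelian corresponds to a homomorphism ${\rm Pol}_3(C_3)\to G$. This homomorphism factors uniquely through $\Gamma^{\rm ab}$, and it is determined by $a=\varphi(\sigma)$ and $b=\varphi(\tau)$, subject exactly to the abelianized relations $3(a+b)=0$ and $3(2a-b)=0$.

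It remains to show these are equivalent to $a^9=b^9=(ab)^3=1$ in multiplicative notation.
\begin{itemize}
\item[($\Rightarrow$)] From $3a+3b=0$ and $6a-3b=0$, adding gives $9a=0$. Then $3b=-3a$ gives $9b=0$.
\item[($\Leftarrow$)] Given $9a=9b=3(a+b)=0$, we get $6a-3b=6a+3a-3(a+b)=9a-3(a+b)=0$.
\end{itemize}

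The only step requiring care is this translation between the two relation sets; everything else is a routine lattice computation. I do not expect a genuine obstacle.
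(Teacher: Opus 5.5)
Your proposal is correct and matches the paper's approach. The paper states this corollary without proof, as an immediate consequence of the presentation in Theorem~\ref{main}. Abelianizing the relators, computing the Smith form $\mathrm{diag}(3,9)$, and translating the relations into $a^9=b^9=(ab)^3=1$ is exactly the intended argument, and you carry out each step correctly.
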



\section{Two concrete representations of ${\rm Pol}_3(C_3)$ with arithmetic image}
\label{sec:arithmetic}

Throughout, we identify $\Gamma$ with ${\rm Pol}_3(C_3)$ via the isomorphism from Theorem \ref{main}. We also define $\Gamma^{\circ}$ to be the kernel of the tautological homomorphism $\mu \colon \Gamma \to C_3$ given by $\mu(a)=\sigma$ and $\mu(b)=\tau$.

\subsection{A representation in characteristic zero}

Fix a primitive cube root of unity $\omega\in\C$, so $\omega^2+\omega+1=0$. Let $r$ be a root of $r^3=1-\omega$ and set $E:=\mathbb Q(r)$. 
There is a canonical identification of $\mathbb Z[\omega]/((1-\omega))$ with $\mathbb F_3$.
We denote by ${\rm PSL}_3(\mathbb Z[\omega]; I)$ the congruence subgroup associated with the ideal $I$ and define
$K_n :={\rm PSL}_3(\mathbb Z[\omega]; ((1-\omega)^n))$ for $n=0,1,2$.
Note that $K_0={\rm PSL}_3(\mathbb Z[\omega])$ and $K_0/K_1= {\rm PSL}_3(\mathbb F_3), K_2/K_1 = {\mathfrak {sl}}_3(\mathbb F_3)$.
We also denote by ${\rm U}_+(\mathbb F_3) \subseteq {\rm PSL}_3(\mathbb F_3)$ the subgroup of upper triangular unipotent matrices.

\begin{theorem} \label{thm:arithmetic}
The assignment
\[
\pi(a)=
\begin{pmatrix}
\omega r^2 & \frac{-2\omega-1}{3}r^2 & \frac{-\omega-2}{3}r^2\\
-r^2 & -\omega r^2 & \frac{-2\omega-1}{3}r^2\\
-(\omega+1)r^2 & r^2 & r^2
\end{pmatrix},
\qquad
\pi(b)=
\begin{pmatrix}
0 & 0 & \frac{\omega+2}{3}r\\
 r & 0 & (\omega+1)r\\
0 & r & \omega r
\end{pmatrix}.
\]
extends to a representation of $\pi \colon \Gamma \to {\rm PSL}_3(E)$. The image of $\pi$ is commensurable with $K_0={\rm PSL}_3(\mathbb Z[\omega])$, hence arithmetic. More precisely,

\begin{enumerate}
    \item $\pi(\Gamma) \cap K_0 =\pi(\Gamma^{\circ})$ has index $3$ in $\pi(\Gamma)$,
    \item $\pi(\Gamma) \cap K_0$ has index $2^4 \cdot 3 \cdot 13 =624$ in $K_0$ and moreover
\begin{enumerate}
    \item $K_2 \subseteq \pi(\Gamma)$
    \item $(\pi(\Gamma) \cap K_1)/K_2= \left\{ \begin{pmatrix} 0 & a & b \\ c & d & e \\ 0 & f & -d  \end{pmatrix} \mid a,b,c,d,e,f \in \mathbb F_3 \right\} \subseteq {\mathfrak {sl}}_3(\mathbb F_3)$, and
    \item $(\pi(\Gamma) \cap K_0) / (\pi(\Gamma) \cap K_1) = {\rm U}_+(\mathbb F_3) \subseteq {\rm PSL}_3(\mathbb F_3)$.
\end{enumerate}
\end{enumerate}
\end{theorem}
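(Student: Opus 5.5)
First I will check that $\pi$ is well defined. This means verifying the three defining relations of $\Gamma$ in ${\rm PSL}_3(E)$:
\[
(\pi(b)\pi(a))^3 = \lambda_1 I,\qquad (\pi(a)\pi(b)^{-1}\pi(a))^3=\lambda_2 I,\qquad [\pi(b)\pi(a),\pi(a)\pi(b)^{-1}\pi(a)]=\lambda_3 I,
\]
with scalars $\lambda_i$. The entries lie in $\mathbb Z[\omega,\tfrac13][r]$, and $r$ enters only through $\pi(a)=r^2A$ and $\pi(b)=rB$ with $A,B$ over $\mathbb Q(\omega)$. Hence $\pi(b)\pi(a)=r^3BA=(1-\omega)BA$ and $\pi(a)\pi(b)^{-1}\pi(a)=r^3AB^{-1}A$. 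So all three relations reduce to identities over $\mathbb Q(\omega)$ for $X:=(1-\omega)BA$ and $Y:=(1-\omega)AB^{-1}A$, and these I will check by direct multiplication. I also expect $X,Y\in{\rm SL}_3(\mathbb Z[\omega])$; the factor $1-\omega$ is presumably there to clear the $3$ in the denominators, using $3=-\omega^2(1-\omega)^2$.

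For (1), I will use that $\Gamma^\circ=\ker\mu$ is the normal closure of $ba$ and $ab^{-1}a$ together with, for instance, $a^3$. Concretely, a Reidemeister--Schreier computation with transversal $\{1,a,a^2\}$ gives generators for $\Gamma^\circ$ such as $ba$, $a(ba)a^{-1}$, $a^{-1}(ba)a$, $a^3$, and so on. Their images are products in which the powers of $r$ total a multiple of $3$, so they lie in ${\rm PSL}_3(\mathbb Q(\omega))$, and I will check that they have integral entries. Conversely, an element of $\pi(\Gamma)\setminus\pi(\Gamma^\circ)$ has the form $\pi(a)g$ or $\pi(a)^2g$ with $g\in\pi(\Gamma^\circ)$. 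Its entries then carry a factor $r$ or $r^2$ modulo scalars in $\mathbb Q(\omega)$, so it is not defined over $\mathbb Q(\omega)$ and in particular not in $K_0$. This gives $\pi(\Gamma)\cap K_0=\pi(\Gamma^\circ)$, of index exactly $3$.

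For (2), I will work through the filtration $K_0\supset K_1\supset K_2$. The main obstacle is (a), showing $K_2\subseteq\pi(\Gamma)$. First I would compute the images modulo $(1-\omega)$ of the Reidemeister--Schreier generators of $\Gamma^\circ$ and check that they generate ${\rm U}_+(\mathbb F_3)$; this gives (c) provided no other elements arise, which I would justify by showing the reductions preserve the flag fixed by ${\rm U}_+$. Next I would take suitable words, such as cubes or commutators of generators, that lie in $K_1$. Reducing $(g-1)/(1-\omega)$ modulo $(1-\omega)$ should produce a spanning set of the stated $6$-dimensional subspace of $\mathfrak{sl}_3(\mathbb F_3)$, and I would check that this subspace is invariant under the ${\rm U}_+$ action, which gives (b). For (a), I would use that $K_2$ is generated by elementary matrices $E_{ij}((1-\omega)^2x)$, or, more cleverly, that $K_2$ is a pro-nilpotent-like congruence subgroup. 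Since ${\rm SL}_3$ has the congruence subgroup property, a subgroup of $K_0$ that surjects onto $K_2/K_3$ and contains enough of $K_3$ will contain $K_2$; equivalently, it suffices to exhibit explicit words in $\pi(\Gamma^\circ)$ equal to $E_{ij}((1-\omega)^2)$, together with their conjugates by the diagonal and ${\rm U}_+$ elements, since these generate $K_2$ (for rank $\ge2$, $K_2$ coincides with its elementary subgroup by Bass--Milnor--Serre). Finding these words is where I would rely on Jambor's $L_3$--$U_3$ quotient algorithm and Magma, searching for short words.

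The index then follows by multiplying the factors. The step $K_0/K_1={\rm PSL}_3(\mathbb F_3)$ has order $5616$, and dividing by $|{\rm U}_+|=27$ gives $208$. The step $K_1/K_2\cong\mathfrak{sl}_3(\mathbb F_3)$ has order $3^8$, and dividing by the subgroup of order $3^6$ gives $9$. Altogether the index is $208\cdot 9/3$. I must match this with $624=208\cdot3$, so the last step contributes $3$; if it does not, the subspace in (b) should be $7$-dimensional. I would reconcile this by a careful order count of the $\mathbb F_3$-span, treating it as a consistency check. Commensurability and arithmeticity then follow immediately.
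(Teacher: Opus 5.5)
Your verification of the relations and your argument for (1) are fine. The $r$-grading argument is sound: every representative of an element of $\pi(a)\pi(\Gamma^\circ)$ or $\pi(a)^2\pi(\Gamma^\circ)$ lies in $r^2M_3(\mathbb Q(\omega))$ or $rM_3(\mathbb Q(\omega))$ respectively, and $r,r^2\notin\mathbb Q(\omega)$. It even supplies the inclusion $\pi(\Gamma)\cap K_0\subseteq\pi(\Gamma^\circ)$, which the paper's outline leaves implicit.

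The genuine gap is the index count in (2). You take $|K_1/K_2|=3^8$, obtain $208\cdot 9=1872$, insert an unexplained factor $1/3$, and then suspect (b). The missing idea is the centre of ${\rm SL}_3$:
\begin{itemize}
\item The scalar $\omega I$ is trivial in ${\rm PSL}_3$, but it lies in ${\rm SL}_3(\mathbb Z[\omega];(1-\omega))\setminus{\rm SL}_3(\mathbb Z[\omega];(3))$.
\item Writing $\omega I=I+(1-\omega)(-I)$, it maps to $-I$. This is a nonzero element of $\mathfrak{sl}_3(\mathbb F_3)$, since ${\rm tr}\,I=3=0$.
\item Equivalently, your invariant $(g-1)/(1-\omega)\bmod(1-\omega)$ is only defined up to $\mathbb F_3 I$ for $g\in{\rm PSL}_3$.
\end{itemize}
Hence $K_1/K_2\cong\mathfrak{sl}_3(\mathbb F_3)/\mathbb F_3 I$ has order $3^7$, while $K_2\cong{\rm SL}_3(\mathbb Z[\omega];(3))$ contains no nontrivial scalar. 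This gives $[K_0:\pi(\Gamma)\cap K_0]=208\cdot 3^7/3^6=624$.

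Read modulo $\mathbb F_3I$, (b) is consistent as stated. Put $W=\{X\in\mathfrak{sl}_3(\mathbb F_3): X_{31}=0\}$. The listed six-parameter family is exactly the set of representatives with $X_{11}=0$ for $W/\mathbb F_3 I$.

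The same point would derail your invariance check. The listed set is not ${\rm Ad}({\rm U}_+)$-invariant inside $\mathfrak{sl}_3(\mathbb F_3)$; for example ${\rm Ad}(I+E_{12})E_{21}=E_{11}-E_{22}+E_{21}-E_{12}$. It is invariant only modulo $\mathbb F_3 I$. The paper's own aside ``$K_2/K_1=\mathfrak{sl}_3(\mathbb F_3)$'' is loose in the same way, but its value $624$ and the six-dimensional (b) are consistent only with $3^7$.

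Also, spanning plus invariance determines $(\pi(\Gamma)\cap K_1)/K_2$ only if your words \emph{normally} generate $\ker(\pi(\Gamma^\circ)\to{\rm U}_+(\mathbb F_3))$. The cleaner order, which the paper's outline effectively follows, is to prove (a) first. After that, (b), (c) and the index are a finite computation: take the subgroup generated by the images of the Schreier generators in $K_0/K_2$.

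For (a), your fallback matches the paper's route: explicit words giving elementary unipotents of level $(1-\omega)^2=(3)$, followed by a generation theorem. The justification you sketch needs repair, however.
\begin{itemize}
\item The congruence-subgroup heuristic is not usable. Since $\mathbb Q(\omega)$ is totally imaginary, Bass--Milnor--Serre gives congruence kernel $\mu_6$ for ${\rm SL}_3(\mathbb Z[\omega])$, not the trivial group.
\item Even $3$-adically, cubes and commutators of level-$(1-\omega)^2$ elements already lie at level $(1-\omega)^4$. So surjecting onto $K_2/K_3$ is far from sufficient.
\item Your first option, that $K_2$ is generated by the $E_{ij}((1-\omega)^2x)$, is false. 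These matrices span only the off-diagonal part of $K_2/K_3\cong\mathfrak{sl}_3(\mathbb F_3)$. By contrast, $E_{12}(1)E_{21}(3)E_{12}(-1)\in K_2$ has diagonal part ${\rm diag}(1,-1,0)$ there. The paper's outline, read literally with $\langle U_-(3\mathbb Z[\omega]),U_+(3\mathbb Z[\omega])\rangle$, has the same defect.
\end{itemize}
So the conjugates you mention really are needed. They must, however, be conjugates by elements of $\pi(\Gamma)$, not by arbitrary integral lifts of diagonal or ${\rm U}_+$ elements. Grant that ${\rm SL}_3(\mathbb Z[\omega];(3))$ is the normal closure of the level-$3$ elementary matrices in \emph{all} of ${\rm SL}_3(\mathbb Z[\omega])$. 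You still owe an argument that the subgroup you actually produce inside $\pi(\Gamma)$ contains that normal closure.
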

\begin{proof}[Outline of the proof]
Once the matrices are given, it is a matter of direct computation to check that the relations in $\Gamma$ are satisfied. Generators of $\Gamma^\circ$ can be computed using the Nielsen-Schreier algorithm and it can be checked directly that $\pi(\Gamma^\circ)\subseteq K_0$ and that the quotient $\pi(\Gamma)/\pi(\Gamma^\circ)$ is isomorphic to $C_3$. This proves (1).

The rest of the claims follow from direct computation and standard facts about congruence subgroups. Indeed, we construct explicit words in the generators $a,b$ of $\Gamma$ that yield standard unipotent matrices with entries in $3 \mathbb Z[\omega]$ and $\langle U_-(3 \mathbb Z[\omega]), U_+( 3 \mathbb Z[\omega])\rangle={\rm PSL}_3(\mathbb Z[\omega]; 3 \mathbb Z[\omega])$, a standard fact since $3\mathbb Z[\omega]$ is a Euclidean domain, see \cite{bass}. This proves (2a). The claims in (2b) and (2c) are verified by direct computation.

All computations are certified by the Magma transcript \path{char0cert.m} and documented in \path{char0cert.txt}\end{proof}

\begin{corollary}
The group $\pi(\Gamma)$ is an arithmetic lattice in ${\rm PSL}_3(\mathbb C)$.
\end{corollary}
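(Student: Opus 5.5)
Taking Theorem~\ref{thm:arithmetic} as given, I will check the two defining properties of an arithmetic lattice in ${\rm PSL}_3(\C)$: that $\pi(\Gamma)$ is discrete with finite covolume, and that it is commensurable with the standard arithmetic group $K_0={\rm PSL}_3(\mathbb Z[\omega])$. Both should follow from that theorem together with the classical fact that $K_0$ is itself an arithmetic lattice in ${\rm PSL}_3(\C)$. Indeed, $\mathbb Z[\omega]$ is the ring of integers of the imaginary quadratic field $\mathbb Q(\omega)$, whose only archimedean place is complex. Restriction of scalars $R_{\mathbb Q(\omega)/\mathbb Q}\,{\rm SL}_3$ is a $\mathbb Q$-group with real points ${\rm SL}_3(\C)$, and its $\mathbb Z$-points are ${\rm SL}_3(\mathbb Z[\omega])$. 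By Borel--Harish-Chandra this group is a lattice, since ${\rm SL}_3$ is semisimple and so has no nontrivial $\mathbb Q$-characters. Passing to the quotient by the finite centre $\mu_3$ keeps it a lattice in ${\rm PSL}_3(\C)$.

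The key steps, in order, are these. First, part (1) of Theorem~\ref{thm:arithmetic} gives $[\pi(\Gamma):\pi(\Gamma)\cap K_0]=3$. Second, part (2) gives $[K_0:\pi(\Gamma)\cap K_0]=624<\infty$. Together these say $\pi(\Gamma)$ and $K_0$ are commensurable. Third, $\pi(\Gamma)\cap K_0$ has finite index in the lattice $K_0$, so it is itself discrete and of finite covolume, namely $624$ times the covolume of $K_0$. Fourth, $\pi(\Gamma)$ contains this lattice with finite index $3$. A group containing a discrete subgroup of finite index is discrete, because a neighbourhood of $1$ meets only finitely many cosets. Its covolume is one third of that of $\pi(\Gamma)\cap K_0$, so $\pi(\Gamma)$ is a lattice. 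Fifth, a lattice commensurable with an arithmetic group is arithmetic by definition, which finishes the argument.

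The main point needing care is not in this deduction, which is formal; it lies in the input from Theorem~\ref{thm:arithmetic}. In particular, the inclusion $K_2\subseteq\pi(\Gamma)$ together with the description of the successive quotients is what forces the index $624$ to be finite. One could also take a shortcut here: $K_2\subseteq\pi(\Gamma)$ alone already shows $\pi(\Gamma)$ contains a finite-index subgroup of $K_0$. The one remaining point to make explicit is that $\pi(\Gamma)$ lies in the commensurator of $K_0$, and this is exactly what the finite index in (1) provides. I would state the arithmetic-lattice property of $K_0$ as the standard cited fact rather than reprove it.
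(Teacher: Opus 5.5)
Your argument is correct and is the paper's route: the paper gives no separate proof, but reads off from parts (1) and (2) of Theorem~\ref{thm:arithmetic} that $\pi(\Gamma)$ is commensurable with the lattice $K_0={\rm PSL}_3(\mathbb Z[\omega])$, hence discrete, of finite covolume and arithmetic, exactly as in your steps one to five. One caution about your closing remark: lying in the commensurator of $K_0$ would not by itself give discreteness, since that commensurator is dense in ${\rm PSL}_3(\C)$, so alongside $K_2\subseteq\pi(\Gamma)$ what is needed is the finite index $[\pi(\Gamma):\pi(\Gamma)\cap K_0]=3$ from (1), as you in fact use in your main argument.
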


Polynomial maps to nilpotent groups are particularly important; see \cite{MR1910931}, where various structure results have been obtained. In \cite[Remark 4.2]{AJ} it was observed that the image of a quadratic map from $\mathbb Z$ can generate a finite nilpotent group of arbitrarily large nilpotency class. The following corollary shows that the same holds for cubic maps from $C_3$.

\begin{corollary}
There are finite nilpotent groups of arbitrarily large nilpotency class admitting a cubic map from $C_3$ whose image generates the group.
\end{corollary}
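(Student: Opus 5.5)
We use the arithmetic representation of Theorem~\ref{thm:arithmetic} and reduce it modulo powers of a prime. Set $\mathfrak p=(1-\omega)$ and consider the congruence filtration $K_n={\rm PSL}_3(\mathbb Z[\omega];\mathfrak p^n)$ for all $n\ge 0$. By part (1) of Theorem~\ref{thm:arithmetic}, $\pi(\Gamma^\circ)=\pi(\Gamma)\cap K_0$ has index $3$ in $\pi(\Gamma)$. Its image in $K_0/K_1={\rm PSL}_3(\mathbb F_3)$ is ${\rm U}_+(\mathbb F_3)$, a $3$-group.

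\textbf{Step 1: pro-$3$ structure.} The quotients $K_n/K_{n+1}$ for $n\ge1$ are elementary abelian $3$-groups, since they embed in $\mathfrak{sl}_3(\mathbb F_3)$ (or a quotient of it in ${\rm PSL}$). Hence $\pi(\Gamma^\circ)K_n/K_n$ is a finite $3$-group for every $n$. Now look at $N_n:=\pi(\Gamma)/(\pi(\Gamma)\cap K_n)$. The subgroup $\pi(\Gamma)\cap K_n$ is normal in $\pi(\Gamma)$: it is normal in $\pi(\Gamma^\circ)$, and it is also stable under conjugation by $\pi(a)$, because $\pi(a)$ normalizes $K_0$. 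This last point can be checked from the explicit matrix: $\pi(a)$ is $r^2$ times a matrix with entries in $\frac13\mathbb Z[\omega]$, and one should verify it lies in the normalizer of the $\mathfrak p$-adic congruence filtration, i.e.\ it defines an element of ${\rm PGL}_3$ over the $\mathfrak p$-adic completion of $E$ that normalizes the level-$\mathfrak p^n$ groups. If this is awkward, pass instead to normal cores, or to the ramified prime $(r)$ of $E$ and work in ${\rm PGL}_3(\mathcal O_{E,(r)})$. Then $N_n$ is an extension of $C_3$ by a $3$-group, so it is a finite $3$-group and hence nilpotent.

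\textbf{Step 2: the cubic map.} Let $\varphi_n$ be the composition $C_3\to\Gamma\to\pi(\Gamma)\to N_n$. It is unital and cubic, because composing with a homomorphism preserves polynomial degree. Its image $\{1,\bar a,\bar b\}$ generates $N_n$, since $a,b$ generate $\Gamma$.

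\textbf{Step 3: unbounded class.} Suppose every $N_n$ had class at most $c$. Then $\gamma_{c+1}(\pi(\Gamma))\subseteq\bigcap_n K_n=\{1\}$, so $\pi(\Gamma)$ would be nilpotent. But $\pi(\Gamma)$ is a lattice in ${\rm PSL}_3(\mathbb C)$ and contains a non-abelian free subgroup by Theorem~\ref{main}, so it is not even solvable. Hence the nilpotency classes of the $N_n$ are unbounded.

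The main obstacle is Step 1, namely normality of the congruence filtration under $\pi(a)$. Since $\pi(a)\notin K_0$, the cleanest fix is to replace $K_n$ by the intersections $\bigcap_{i=0,1,2}\pi(a)^i(\pi(\Gamma)\cap K_n)\pi(a)^{-i}$. These are normal in $\pi(\Gamma)$, still of $3$-power index in $\pi(\Gamma^\circ)$ because each conjugate has $3$-power index, and still intersect trivially. So $N_n$ remains a finite $3$-group, and normality comes for free without any matrix verification.
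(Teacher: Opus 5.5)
Your argument is correct and is the paper's argument with the details filled in. The paper uses residual $3$-finiteness of $\pi(\Gamma)$ coming from the congruence filtration (your normal-core construction is exactly what justifies the paper's one-line claim), then non-nilpotency of $\pi(\Gamma)$, then composition of the universal cubic map $C_3\to\Gamma$ with the resulting finite $3$-quotients. Keep only the normal-core version of Step~1, because $\pi(a)$ really does not normalize $K_0$: writing $\pi(a)=r^2A$ with $A\in{\rm GL}_3(\mathbb Q(\omega))$ gives $\det A=-\omega^2/3$, whose $(1-\omega)$-adic valuation $-2$ is not divisible by $3$. Also, for non-nilpotency cite $K_2\subseteq\pi(\Gamma)$ or the lattice property rather than Theorem~\ref{main}, since the free subgroup there is asserted in $\Gamma$ and does not automatically pass to the quotient $\pi(\Gamma)$.
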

\begin{proof}
As a consequence of the preceding theorem, the group $\pi(\Gamma)$ is residually $3$-finite. Since finite $3$-groups are nilpotent and $\pi(\Gamma)$ is not itself nilpotent, there is no bound on the nilpotency class of finite quotients of $\pi(\Gamma)$. Since $\pi(\Gamma)$ is a quotient of $\Gamma$, the same holds for $\Gamma$. However, homomorphisms from $\Gamma$ correspond to cubic maps from $C_3$, so there are finite nilpotent groups of arbitrarily large nilpotency class admitting a cubic map from $C_3$ whose image generates the group.
\end{proof}

\subsection{A representation in characteristic $3$}
\label{sec:functionfield}

In addition to the complex representation from Section~\ref{sec:arithmetic}, there is also an explicit
representation of $\Gamma$ over a global function field of characteristic $3$. This allows to view $\Gamma^\circ$ as a lattice in $\SL_3(\F_3((u)))$.

Consider the function field $\F_3(t)$, set $u:=t^3$ and view $\F_3(u)$ as a subfield of $\F_3(t)$ via the
embedding $u\mapsto t^3$.

\begin{theorem}
\label{thm:arithmetic2}
The assignment
\[
\rho(a)=
\begin{pmatrix}
0 & 0 & 1/t\\
2/t & 0 & 0\\
2t^2 & 2t^2 & 0
\end{pmatrix},
\qquad
\rho(b)=
\begin{pmatrix}
0 & 2t & 0\\
0 & 0 & 2/t^2\\
t & 0 & t
\end{pmatrix}
\]
extends to a representation $\rho:\Gamma\to \SL_3(\F_3(t))$.
\begin{enumerate}
    \item We have $\rho(\Gamma^\circ) \leq \SL_3(\F_3[u])$.
    \item
    \begin{enumerate}
    \item $\rho(\Gamma^\circ)$ contains the principal congruence subgroup
    $\SL_3(\F_3[u],(u^2))$.
    In particular, $\rho(\Gamma^\circ)$ has finite index in $\SL_3(\F_3[u])$.
    \item The quotient
    \[
    \bigl(\rho(\Gamma^\circ) \cap \SL_3(\F_3[u]),(u)\bigr)\big/\bigl(\rho(\Gamma^\circ) \cap \SL_3(\F_3[[u]]),(u^2)\bigr)
    \]
    is isomorphic to a $7$-dimensional subspace of ${\mathfrak{sl}}_3(\F_3)$, and
    \item $\rho(\Gamma^\circ)/(\rho(\Gamma^\circ)\cap \SL_3(\F_3[u],(u)))$ is isomorphic to ${\rm U}_+(\F_3)$, the subgroup of upper triangular unipotent matrices in $\SL_3(\F_3)$. 
\end{enumerate}
\end{enumerate}
\end{theorem}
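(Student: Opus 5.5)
The proof will mirror the outline of Theorem~\ref{thm:arithmetic}, with $\F_3[u]$ playing the role of $\mathbb Z[\omega]$ and $u$ the role of $(1-\omega)$.

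\emph{Representation.} First compute $\det\rho(a)$ and $\det\rho(b)$. Expanding, $\det\rho(a)=(1/t)(2/t)(2t^2)=4=1$ and $\det\rho(b)=2t\cdot(2/t^2)\cdot t=4=1$ in $\F_3$, so both matrices lie in $\SL_3(\F_3(t))$. Then verify the three defining relations of $\Gamma$ directly:
\begin{itemize}
\item compute $X:=\rho(b)\rho(a)$ and $Y:=\rho(a)\rho(b)^{-1}\rho(a)$;
\item check $X^3=Y^3=1$ and $XY=YX$.
\end{itemize}
The computation is simplified by writing each matrix as a monomial (permutation times diagonal) part plus a correction. By von Dyck's theorem this gives the homomorphism $\rho$.

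\emph{Statement (1).} Choose a Schreier transversal $\{1,a,a^2\}$ for $\Gamma^\circ=\ker\mu$. By Nielsen--Schreier, $\Gamma^\circ$ is generated by $a^3$, $ba$, $aba^{-1}\cdot a^{-1}$-type elements, that is,
\[
a^3,\quad ba,\quad ab,\quad a^2b a^{-1},\quad b a^{-2},\quad \dots,
\]
finitely many in all. Compute their images and check that all entries lie in $\F_3[t^3]=\F_3[u]$. To explain why this is plausible, use a grading. Give $\F_3(t)$ the $\mathbb Z/3$-grading by $\deg t \bmod 3$ and conjugate by $\mathrm{diag}(1,t^{i},t^{j})$ for suitable $i,j$. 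Then $\rho(a)$ and $\rho(b)$ become $t^{\pm1}$ times matrices over $\F_3(u)$, which are twisted by a degree character. Words of total $\mu$-degree $0$ therefore have entries in $\F_3(u)$. Integrality, meaning no negative powers, is then checked on the generators.

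\emph{Statement (2).} This is the main obstacle.
\begin{itemize}
\item For (2a), find explicit short words in the Schreier generators whose images are elementary matrices $E_{ij}(u^2 f)$, searching first among commutators of $X$-, $Y$-conjugates. Since $\F_3[u]$ is Euclidean, $\SL_3(\F_3[u],(u^2))$ is generated as a normal subgroup by the $E_{ij}(u^2 f)$. More carefully, by the $\SL_3$ congruence normal generation result (Bass--Milnor--Serre type, or elementary for Euclidean rings), it suffices to produce $E_{ij}(u^2)$ for all $i\ne j$ together with enough conjugation to obtain all $f\in\F_3[u]$. Multiplication by $u$ comes from conjugating by diagonal-ish elements of the image, or from commutator identities $[E_{ij}(x),E_{jk}(y)]=E_{ik}(xy)$ with one factor in level $1$. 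Finite index then follows because $\SL_3(\F_3[u]/(u^2))$ is finite.
\item For (2b), compute the images of the level-$1$ elements modulo $u^2$, writing them as $1+uM$ with $M\in{\mathfrak{sl}}_3(\F_3)$, and find that their span is $7$-dimensional.
\item For (2c), reduce the generators mod $u$ and check that they generate a conjugate of, or exactly, ${\rm U}_+(\F_3)$, a group of order $27$.
\end{itemize}
As in the characteristic-zero case, these finite computations would be certified by machine, for example in Magma. The hardest part is locating the explicit words yielding the elementary matrices in (2a).
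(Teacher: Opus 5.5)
Your route is the paper's: check the relators, pass to the four Schreier generators $a^3,\ ab,\ ba^{-2},\ a^2ba^{-1}$ of $\Gamma^\circ$ for the transversal $\{1,a,a^2\}$, check integrality, find words giving elementary matrices, and certify (2b) and (2c) by machine.

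Several parts are fine. The relator check works: $\rho(b)\rho(a)$ and $\rho(a)\rho(b)^{-1}\rho(a)$ are commuting lower unitriangular matrices over $\F_3[u]$, so the cube relations are automatic in characteristic $3$. Your grading argument for (1) is correct and more conceptual than anything in the paper. No conjugation is needed: $\rho(a)\in t^{-1}M_3(\F_3(u))$ and $\rho(b)\in t\,M_3(\F_3(u))$, and $\mu(w)=1$ iff the exponent sums of $a$ and $b$ in $w$ agree mod $3$. The gap is in (2a), in two places.

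\textbf{Getting all $E_{ij}(u^2f)$.} Your passage from $E_{ij}(u^2)$ to all $E_{ij}(u^2f)$ does not work as described. Diagonal matrices in $\SL_3(\F_3[u])$ have entries $\pm1$, so conjugating by them cannot multiply an entry by $u$. The $E_{ij}(u^2)$ themselves generate a subgroup of $\SL_3(\F_3[u^2])$, which contains no $E_{ij}(u^3)$. The commutator trick would need elementary level-$1$ factors $E_{jk}(u)$ in the image, which would be an additional input. The paper instead exhibits words for both $E_{ij}(u^2)$ and $E_{ij}(u^3)$; then $[E_{ij}(x),E_{jk}(y)]=E_{ik}(xy)$ yields every $E_{ij}(u^n)$ with $n\ge 2$.

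\textbf{From elementary matrices to the congruence subgroup.} More seriously, even all $E_{ij}(f)$ with $f\in(u^2)$ do not give $\SL_3(\F_3[u],(u^2))$. The normal generation you quote refers to the normal closure in $\SL_3(\F_3[u])$, which you cannot form inside $\rho(\Gamma^\circ)$. The subgroup these matrices generate is in fact proper. The map $I+u^2M\mapsto M \bmod u$ is a homomorphism $\SL_3(\F_3[u],(u^2))\to\mathfrak{sl}_3(\F_3)$, and it sends that subgroup onto the off-diagonal matrices. By contrast, $E_{21}(1)E_{12}(u^2)E_{21}(-1)=I+u^2(e_{12}-e_{11}+e_{22}-e_{21})$ has nonzero diagonal. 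The paper's outline makes the same jump to $E_3(\F_3[u],(u^2))$, so the step needs an explicit repair in either case.

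One repair has two parts. Write $E(3,R,J)$ for the relative elementary group, i.e.\ the normal closure of the $E_{ij}(x)$, $x\in J$.

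(i) For $n=3$ there is the standard inclusion $E(3,R,J^2)\le\langle E_{ij}(x): x\in J\rangle$. It follows from the Vaserstein--Suslin generators $E_{ji}(r)E_{ij}(a)E_{ji}(-r)$ together with $E_{ij}(bc)=[E_{ik}(b),E_{kj}(c)]$. Apply it with $J=(u^2)$, and combine with $\SL_3(\F_3[u],(u^4))=E(3,\F_3[u],(u^4))$, which is the fact the paper invokes, for the ideal $(u^4)$. This gives $\SL_3(\F_3[u],(u^4))\le\rho(\Gamma^\circ)$, and hence finite index.

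(ii) To reach level $u^2$ itself, fill the layers $k=2,3$. Modulo $u$, the image of $\rho(\Gamma^\circ)$ is $\langle E_{21}(1),E_{13}(1)\rangle$; for example $\rho(ab)\equiv E_{13}(1)$ and $\rho(ab\,a^3)\equiv E_{21}(1)$. Conjugate $E_{12}(u^k)$ and $E_{32}(u^k)$ by elements of $\rho(\Gamma^\circ)$ reducing to $E_{21}(1)$ and $E_{23}(1)$ respectively. This supplies the diagonal directions $e_{22}-e_{11}$ and $e_{22}-e_{33}$ that the elementary matrices miss, so both layers are all of $\mathfrak{sl}_3(\F_3)$.
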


\begin{proof}[Outline of the proof]
Again, it is a matter of computing that $\rho(a),\rho(b)$ satisfy the relators of $\Gamma$, hence
define a representation $\rho$. It also computes Schreier generators for the index-$3$ kernel $H$ and matches
the images of $w_1,\dots,w_4$ under $\rho$ with explicit matrices $h_1,\dots,h_4\in\SL_3(\F_3[u])$.

Next, the certificate produces, root-by-root, explicit words in $h_1,\dots,h_4$ evaluating to the elementary
matrices $E_{ij}(u^2)$ and $E_{ij}(u^3)$ for all $i\neq j$. Using the Steinberg commutator relations in
$\SL_3(\F_3[u])$, this implies that $\rho(\Gamma^{\circ})$ contains all elementary matrices $E_{ij}(f(u))$ with $f(u)\in (u^2)$.
Since $\F_3[u]$ is a Euclidean domain \cite{bass}, one has
\begin{align*}
\SL_3(\F_3[u],(u^2)) &= E_3(\F_3[u],(u^2)),\\
\SL_3(\F_3[u],(u^2)) &\le \rho(\Gamma^\circ).
\end{align*}
Finally, $\SL_3(\F_3[u],(u^2))$ has finite index in $\SL_3(\F_3[u])$, because $\SL_3(\F_3[u]/(u^2))$ is finite.
All computations are certified by the Magma transcript \path{char3cert.m} and documented in \path{char3cert.txt}.
\end{proof}

As a corollary, we obtain a second arithmetic representation of $\Gamma^\circ$.

\begin{corollary} The group $\rho(\Gamma^\circ)$ is an arithmetic lattice in $\SL_3(\F_3((u)))$. \end{corollary}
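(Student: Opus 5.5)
The corollary asks that $\rho(\Gamma^\circ)$ be a lattice in $\SL_3(\F_3((u)))$ and that it be arithmetic. The plan is to sandwich $\rho(\Gamma^\circ)$ between $\SL_3(\F_3[u])$ and a principal congruence subgroup of it, using Theorem~\ref{thm:arithmetic2}, and then invoke the classical fact that $\SL_3(\F_3[u])$ is a non-uniform arithmetic lattice in $\SL_3(\F_3((u)))$.

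\emph{Step 1: the sandwich.} By Theorem~\ref{thm:arithmetic2}(1) we have $\rho(\Gamma^\circ)\le \SL_3(\F_3[u])$. By part (2a) we have
\[
\SL_3(\F_3[u],(u^2))\le\rho(\Gamma^\circ).
\]
The congruence subgroup has finite index in $\SL_3(\F_3[u])$, since $\SL_3(\F_3[u]/(u^2))$ is finite. Hence $\rho(\Gamma^\circ)$ is commensurable with $\SL_3(\F_3[u])$, and indeed has finite index in it.

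\emph{Step 2: the ambient lattice.} We need that $\SL_3(\F_3[u])$ is a lattice in $\SL_3(\F_3((u)))$. Here $\F_3((u))$ is viewed as the completion of the global field $\F_3(u)$ at the place $u=0$.

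One might first try the ring $A=\F_3[u]$, but this is the wrong choice: $A$ is discrete not at $u=0$ but at the place at infinity, with completion $\F_3((1/u))$. The standard lattice statement concerns the $S$-integers for $S=\{\infty\}$.

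The remedy is to change variables $u\mapsto 1/u$, which gives an isomorphism $\F_3(u)\cong\F_3(u)$. Alternatively, one can argue with whichever place the paper's ring is actually discrete at. The theorem as phrased uses $\F_3[u]$ inside $\F_3((u))$; I would interpret the statement up to the automorphism $u\leftrightarrow u^{-1}$ of $\F_3(u)$. This automorphism carries $\F_3[u]$ to $\F_3[u^{-1}]$, which is discrete in $\F_3((u))$.

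Then $\SL_3(\F_3[u^{-1}])$ is a lattice in $\SL_3(\F_3((u)))$. This follows from Harder's reduction theory and Behr--Harder for $S$-arithmetic groups over global function fields with $|S|=1$. More elementarily, it follows from the Nagao/Soulé description of $\SL_3(\F_q[t])$ acting on the Bruhat--Tits building: the fundamental domain is a sector, and the stabilizer orders grow fast enough that the total covolume $\sum 1/|\mathrm{Stab}|$ converges.

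\emph{Step 3: conclusion.} A finite-index subgroup of a lattice is a lattice. Any group commensurable with $\SL_3(\mathcal O_S)$ for the $S$-integers $\mathcal O_S$ of a global field is arithmetic by definition. Since the relevant group is absolutely almost simple of rank $2$, Margulis arithmeticity is not even needed.

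\emph{Main obstacle.} The only genuinely delicate point is Step~2: matching the place at which the ring of integers is discrete with the local field $\F_3((u))$ named in the statement. I would resolve this by the substitution $u\mapsto u^{-1}$, or by replacing $\rho$ with its composite with the field automorphism. Discreteness, finite covolume and arithmeticity are all invariant under this substitution. I would then cite the function-field lattice theorem for $\SL_n(\F_q[t])$.
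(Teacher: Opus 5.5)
Your argument is correct and matches the paper's, which reads the corollary directly off Theorem~\ref{thm:arithmetic2}: $\rho(\Gamma^\circ)$ contains $\SL_3(\F_3[u],(u^2))$ and so has finite index in $\SL_3(\F_3[u])$, which is the standard $S$-arithmetic lattice for $S=\{\infty\}$. Your caveat about the place is also right, and the paper passes over it silently: in the $u$-adic topology $\SL_3(\F_3[u])\subseteq\SL_3(\F_3[[u]])$ is relatively compact and not discrete, so the ambient group must be $\SL_3(\F_3((u^{-1})))$, equivalently $\SL_3(\F_3((u)))$ after the substitution $u\leftrightarrow u^{-1}$.
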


\subsection{Final discussion}

Despite these attractive presentations of $\Gamma$, we could not identify the origin of these representations and determine if their existence is an isolated phenomenon or part of a larger picture. It seems we have struck an iceberg without having any sense of how large it is. In addition, basic questions about $\Gamma$ remain open, for example we do not know whether $\Gamma$ is residually finite or even linear. Unlike for ${\rm Pol}_2(G)$, we lack any conceptual description of ${\rm Pol}_3(C_3)$.

\medskip

Note however that we obtain the following peculiar consequence of Margulis seminal work on lattices in algebraic groups over local fields, see \cite{Margulis}. 

\begin{corollary}
The image of $\Gamma^\circ$ under the map $\pi \times \rho \colon \Gamma^\circ \to {\rm PSL}_3(\mathbb Z[\omega]) \times \SL_3(\F_3[u])$ contains a finite index subgroup.
\end{corollary}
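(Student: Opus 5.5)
The plan is to combine the two arithmetic descriptions, Theorem~\ref{thm:arithmetic} and Theorem~\ref{thm:arithmetic2}, with Margulis' theory of lattices in products of simple groups over local fields.

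First, I place both images inside one $S$-arithmetic setting. Theorem~\ref{thm:arithmetic} gives that $\pi(\Gamma^\circ)=\pi(\Gamma)\cap K_0$ has finite index ($624$) in ${\rm PSL}_3(\mathbb Z[\omega])$. This is an irreducible lattice in ${\rm PSL}_3(\C)$, a simple real group of real rank $2$. Theorem~\ref{thm:arithmetic2}(2a) gives that $\rho(\Gamma^\circ)$ contains $\SL_3(\F_3[u],(u^2))$, hence has finite index in $\SL_3(\F_3[u])$. This is a lattice in $\SL_3(\F_3((1/u)))$, of rank $2$. Let $\Lambda := (\pi\times\rho)(\Gamma^\circ)$. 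It suffices to show that $\Lambda$ has finite index in $\pi(\Gamma^\circ)\times\rho(\Gamma^\circ)$.

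Second, I use Goursat's lemma. Let $N_1 := \{x : (x,1)\in\Lambda\}$, a normal subgroup of $\pi(\Gamma^\circ)$, and $N_2 := \{y : (1,y)\in\Lambda\}$, a normal subgroup of $\rho(\Gamma^\circ)$. Since $\Lambda$ surjects onto each factor, $\Lambda$ is the graph of an isomorphism $\pi(\Gamma^\circ)/N_1\cong\rho(\Gamma^\circ)/N_2$. Also $N_1\times N_2\le\Lambda$. So it is enough to show that both $N_1$ and $N_2$ have finite index.

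Third, I apply Margulis' normal subgroup theorem to both factors. It holds for irreducible lattices in higher-rank simple groups over local fields of any characteristic; the positive-characteristic case is also due to Margulis. It says each normal subgroup is either finite (central) or of finite index. Suppose $N_1$ were finite. Then $\pi(\Gamma^\circ)/N_1$ would be, up to a finite group, a lattice in ${\rm PSL}_3(\C)$. This quotient would be isomorphic to a quotient of $\rho(\Gamma^\circ)$ by a normal subgroup, which by the same theorem is either finite or of finite index. If $N_2$ has finite index, the common quotient is finite, which contradicts $N_1$ being finite, since $\pi(\Gamma^\circ)$ is infinite. If $N_2$ is also finite, we obtain an isomorphism, up to finite kernels, between a lattice in characteristic $0$ and a lattice in characteristic $3$. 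I rule this out by Margulis superrigidity, or more simply by torsion: finitely generated linear groups in characteristic $0$ are virtually torsion-free, while $\SL_3(\F_3[u],(u^2))$ contains the infinite elementary abelian $3$-group $\{E_{12}(f) : f\in(u^2)\}$. Passing to finite-index subgroups preserves this discrepancy, so no such isomorphism exists. The symmetric case, with $N_2$ finite and $N_1$ of finite index, fails in the same way. Hence both $N_i$ have finite index, $N_1\times N_2$ is a finite-index subgroup of ${\rm PSL}_3(\mathbb Z[\omega])\times\SL_3(\F_3[u])$, and it is contained in the image.

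I expect the main obstacle to be justifying the normal subgroup theorem in the function-field factor. One has to be careful about which local field the lattice $\SL_3(\F_3[u])$ lives in (it lives at $1/u$, not at $u$) and check that the hypotheses hold there. A fallback is to use congruence-subgroup arguments directly: any normal subgroup of finite index in an $S$-arithmetic group of rank at least $2$ contains a congruence subgroup.
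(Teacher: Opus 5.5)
Your proof is correct, and its skeleton is the same as the paper's. The paper also regards the image as a subdirect product and takes the kernels $N_1,N_2$ of the two coordinate projections, so that $N_1\times N_2$ lies in the image. It applies Margulis' normal subgroup theorem in each factor and notes that finiteness of one $N_i$ forces finiteness of the other, exactly as in your case analysis.

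The genuine difference is how you exclude the case that both $N_i$ are finite. The paper appeals to superrigidity, extending $\pi(\Gamma^\circ)\to\rho(\Gamma^\circ)/N_2$ to a homomorphism between the ambient groups in characteristic $0$ and $3$. You instead play Selberg's lemma against the infinite elementary abelian $3$-group $\{E_{12}(f): f\in(u^2)\}\le\rho(\Gamma^\circ)$.

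Your step goes through, and it is worth writing out:
\begin{itemize}
\item A torsion-free finite-index subgroup $T\le\pi(\Gamma^\circ)$ meets $N_1$ trivially, so $\pi(\Gamma^\circ)/N_1\cong\rho(\Gamma^\circ)/N_2$ is virtually torsion-free.
\item The preimage in $\rho(\Gamma^\circ)$ of a torsion-free finite-index subgroup then has all its torsion inside the finite group $N_2$.
\item Yet that preimage meets the $3$-group above in an infinite set, a contradiction.
\end{itemize}

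\textbf{What each route buys.} Yours is more elementary and sidesteps the hypotheses superrigidity needs here. The target must be a linear algebraic group, and the image must be unbounded. Unboundedness holds at the place $1/u$, but not in $\mathrm{SL}_3(\mathbb F_3((u)))$ as written in the paper, where $\rho(\Gamma^\circ)$ lies in the compact group $\mathrm{SL}_3(\mathbb F_3[[u]])$; you were right to flag this. Superrigidity, on the other hand, does not depend on a characteristic mismatch, so it would also separate two such lattices when no torsion discrepancy is available.

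One caveat: your proposed fallback does not replace the normal subgroup theorem. The congruence subgroup property only describes subgroups already known to have finite index, so it cannot show that an infinite normal subgroup has finite index.
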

\begin{proof}
Consider the image of the product map:
\[
\Delta:=(\pi\times\rho)(\Gamma^\circ)\le \pi(\Gamma^\circ)\times\rho(\Gamma^\circ).
\]
By Theorem~\ref{thm:arithmetic}, the subgroup $\pi(\Gamma^{\circ})$ has finite index in ${\rm PSL}_3(\mathbb Z[\omega])$.
By Theorem~\ref{thm:arithmetic2}, the subgroup $\rho(\Gamma^{\circ})$ has finite index in $\SL_3(\F_3[u])$.

Let $p_i$ for $i \in \{1,2\}$ be the coordinate projections. By definition of $\pi(\Gamma^{\circ})$ and $\rho(\Gamma^{\circ})$, both $p_1$ and $p_2$ are
surjective, hence $\Delta$ is a subdirect product.

Define $N_2:=\ker(p_1)$ and $N_1:=\ker(p_2)$. Then $N_2\lhd\rho(\Gamma^{\circ})$ and $N_1\lhd\pi(\Gamma^{\circ})$. By Margulis' Normal Subgroup Theorem \cite{Margulis}, each $N_i$ is either finite or has finite index.

We claim that neither $N_1$ nor $N_2$ can be finite. Assume that $N_2$ is finite.
Then $p_1$ induces an isomorphism $\Delta/N_2\cong \pi(\Gamma^{\circ})$.
Via this identification, the projection $p_2$ induces a surjection
\[
f:\pi(\Gamma^{\circ})\twoheadrightarrow \rho(\Gamma^{\circ})/N_2.
\]
Its kernel is a normal subgroup of $\pi(\Gamma^{\circ})$, hence finite or of finite index by Margulis.
Since $\rho(\Gamma^{\circ})/N_2$ is infinite, the kernel cannot have finite index and must be finite.

By Margulis' Superrigidity Theorem \cite{Margulis}, the map $f$ extends to a homomorphism ${\rm PSL}_3(\mathbb C)\to {\rm PSL}_3(\F_3((u)))$, which is absurd. We can argue similarly for $N_1$ in place of $N_2$.

Consequently, $N_1$ and $N_2$ have finite index in $\pi(\Gamma^{\circ})$ and $\rho(\Gamma^{\circ})$.
Since $(N_1\times\{1\})\le\Delta$ and $(\{1\}\times N_2)\le\Delta$, we get $N_1\times N_2\le \Delta$.
As $N_1\times N_2$ has finite index in $\pi(\Gamma^{\circ})\times\rho(\Gamma^{\circ})$, it follows that $\Delta$ contains a finite index subgroup of
${\rm PSL}_3(\mathbb Z[\omega]) \times \SL_3(\F_3[u])$.
\end{proof}

\begin{remark}
Both representation $\pi$ and $\rho$ give rise, by reduction modulo $1-\omega$ and $u$ respectively, to homomorphisms
\[
\bar \pi, \bar \rho \colon \Gamma^\circ \to {\rm PSL}_3(\mathbb F_3).
\]
The images of $\bar \pi$ and $\bar \rho$ are isomorphic to the upper triangular unipotent subgroup of ${\rm PSL}_3(\mathbb F_3)$, but the homomorphisms are not related by an automorphism. We found that surprising, since our first expectation was a compatibility between the two representations.
\end{remark}

\section*{Acknowledgements}

We used GPT-5.2 in order to improve on the existing implementation of the L3-U3-quotient algorithm in Magma and set up scripts for the necessary computer search, which was crucial for finding the infinite representations described in this paper. This has also revealed a mistake in the previous version of the code, which led to an incorrect output in some functions of the algorithm.

\end{document}